\documentclass[letterpaper, 10pt, conference]{ieeeconf}    
\IEEEoverridecommandlockouts                            

\usepackage{amsmath,amssymb}
\usepackage{graphicx}
\usepackage{times}
\usepackage{bm} 
\usepackage{siunitx}
\usepackage{subfloat}
\usepackage{physics}
\usepackage{algorithmic}
\usepackage{algorithm}
\usepackage{cite}
\usepackage{caption}
\let\proof\relax
\let\endproof\relax
\usepackage{amsthm}
\theoremstyle{definition}

\newtheorem{assumption}{Assumption}
\newtheorem{theorem}{Theorem}

\newtheorem{remark}{Remark}
\newtheorem{lemma}{Lemma}

\newtheorem*{notation}{Notations}
\newtheorem{problem}{Problem}

\title{Acceleration of Moment Bound Optimization for Stochastic Chemical Reactions using Reaction-wise Sparsity of Moment Equations}
\author{
  Tomoki Sadatoshi, Antonis Papachristodoulou, Yutaka Hori
  \thanks{This work was supported in part by JSPS KAKENHI Grant Number JP24KK0267 and by UKRI-EPSRC via grants EP/Y014073/1, EP/X031470/1 and UKRI2108. For the purpose of Open Access, the authors have applied a CC BY public copyright licence to any Author Accepted Manuscript (AAM) version arising from this submission.}
  \thanks{T. Sadatoshi and Y. Hori are with the Department
of Applied Physics and Physico-Informatics, Keio University, Kanagawa 223-8522 Japan. {\tt \{tsadatoshi.29, yhori\}@keio.jp}}
\thanks{A. Papachristodoulou is with the Department of Engineering Science, Oxford University, Oxford OX1 3PJ United Kingdom. {\tt antonis@eng.ox.ac.uk}}
}
\date{February 2026}

\begin{document}

\maketitle
\begin{abstract}
Moment dynamics in stochastic chemical kinetics often involve an infinite chain of coupled equations, where lower-order moments depend on higher-order ones, making them analytically intractable. Moment bounding via semidefinite programming provides guaranteed upper and lower bounds on stationary moments. However, this formulation suffers from the rapidly growing size of semidefinite constraints due to the combinatorial growth of moments with the number of molecular species.
In this paper, we propose a sparsity-exploiting matrix decomposition method for semidefinite constraints in stationary moment bounding problems to reduce the computational cost of the resulting semidefinite programs. Specifically, we characterize the sparsity structure of moment equations, where each reaction involves only a subset of variables determined by its reactants, and exploit this structure to decompose the semidefinite constraints into smaller ones. We demonstrate that the resulting formulation reduces the computational cost of the optimization problem while providing practically useful bounds.
\end{abstract}

\section{Introduction}
Model-based approaches for the analysis and design of biomolecular systems have become increasingly important in synthetic biology applications\cite{kitano2002systems,del2015biomolecular}. 
A key challenge in the analysis of biomolecular reaction systems is their intrinsic stochasticity, which arises from random molecular interactions due to the low copy numbers of molecular species in a cell \cite{elowitz2002stochastic}.
Such stochastic dynamics are commonly described by the Chemical Master Equation (CME), which governs the time evolution of the probability distribution of molecular copy numbers \cite{gillespie1992rigorous}.
In most practical cases, the CME is an infinite-dimensional linear equation, making analytical solutions intractable. This has led to extensive research on approximate methods for analyzing stochastic dynamics \cite{elf2003fast, gillespie2000chemical}.
Among these, moment-based approaches enable an intuitive characterization of the copy number distributions using statistics such as mean and variance. 
These approaches analyze moment equations induced by the CME through approximations\cite{khammash2022cybergenetics}.
Moment closure \cite{naasell2003extension, hespanha2008moment, singh2010approximate} is a widely used approach that \textit{closes} the hierarchy of moment equations by approximating higher-order moments in terms of lower-order ones. However, a key limitation of such approximations is the lack of systematic error bounds, making it difficult to assess their accuracy.

To address this issue, semidefinite programming (SDP)-based methods have been proposed for bounding moments, enabling the computation of mathematically guaranteed upper and lower bounds despite the intractable nature of the underlying moment equations \cite{sakurai2017convex,sakurai2018optimization,sakurai2022interval,dowdy2018bounds,ghusinga2017exact,kuntz2019bounding,li2025moment,sakurai2018bounding,dowdy2018dynamic,holtorf2024tighter}.
These methods formulate optimization problems over convex feasible sets of moment variables defined by the moment equations and necessary semidefinite conditions that valid moment sequences must satisfy, and provide practically useful bounds for both stationary \cite{sakurai2017convex,sakurai2018optimization,sakurai2022interval,dowdy2018bounds,ghusinga2017exact,kuntz2019bounding} and transient moments \cite{sakurai2018bounding,dowdy2018dynamic,holtorf2024tighter} in a broad class of reaction systems.
Despite these advances, a major 
challenge lies in the computational cost. 
Specifically, the number of possible moment combinations grows combinatorially with the number of molecular species, leading to a rapid increase in the size of the matrix constraints in the optimization.
A known approach in semidefinite programming exploits sparsity in the underlying problem structure to 
reduce the size of the matrix constraints \cite{fukuda2001exploiting,wang2021chordal,zheng2021chordal,klep2025exploiting}. 
A similar idea may be applicable to the optimization problems over moments in stochastic chemical reaction systems. However, it remains unclear how sparsity manifests in the corresponding moment equations and how this structure can be effectively exploited.

In this paper, we propose a method to accelerate optimization computation in moment bounding optimization for stochastic chemical reactions by exploiting sparsity in the moment equations and decomposing the resulting semidefinite matrix constraints.
Specifically, our contributions are twofold:
(i) we characterize the sparsity structure of moment equations that emerges when they are decomposed in a \textit{reaction-wise} manner; and
(ii) we exploit this structure to decompose the semidefinite constraints into smaller ones, thereby accelerating the resulting semidefinite programs.
Despite introducing a relaxation, the proposed decomposition is closely related to chordal decomposition, a well-known technique in semidefinite programming that exploits sparsity to decompose large semidefinite constraints into smaller ones, and is therefore expected to provide tight bounds in practice\cite{fukuda2001exploiting}. 
We demonstrate that the proposed method reduces computation time while maintaining bounding accuracy in numerical experiments on a gene expression system with 7 molecular species and 14 reactions.

\begin{notation}
    $\mathbb{P}(\cdot)$ denotes a probability distribution, and $\mathbb{E}[\cdot]$ denotes the moments with respect to the distribution $\mathbb{P}(\cdot)$.
For vectors $\bm{x} = [x_1, x_2, \dots, x_n]^{\top}$ and $\bm{\alpha} = [\alpha_1, \alpha_2, \dots, \alpha_n]^{\top}\in \mathbb{N}_0^n$, a multi-index exponent is defined by $\bm{x}^{\bm{\alpha}} = x_1^{\alpha_1} x_2^{\alpha_2} \cdots x_n^{\alpha_n}$.
For a vector $\bm{\alpha}$, $|\bm{\alpha}| := \sum_{j=1}^{n} \alpha_j$.
$\bm{v}(\Phi)$ denotes the vector of the monomials in the set $\Phi$ arranged in graded lexicographic order. $\mathcal{M}_{\delta}$ denotes the set of all monomials $\bm{x}^{\bm{\alpha}}$ with degree less or equal to $\delta$, \textit{i.e.,} $\{\bm{x}^{\bm{\alpha}} \mid |\bm{\alpha}| \le \delta\}$.

\end{notation}

\section{Formulation and Problem Statement}
\subsection{Moment equation for stochastic chemical reaction}
We consider the stochastic kinetics of molecular copy numbers in chemical reaction systems. 
Suppose the reaction system consists of $n$ molecular species
$X_1, X_2, \cdots, X_n$ and $r$ chemical reactions
$\mathcal{R}_1, \mathcal{R}_2, \cdots, \mathcal{R}_r$.
Let $x_j$ denote the copy number of molecular species
$X_j$, and define the vector of copy numbers $\bm{x} := [x_1, x_2, \cdots, x_n]^\top \in \mathcal{X} \subseteq \mathbb{N}_0^n$, which characterizes the state of the reaction system. 
For each reaction $\mathcal{R}_i$, we denote a propensity function and  a stoichiometric vector by $w_i(\bm{x})$ 
and $\bm{s}_i = [s_{i1}, s_{i2}, \ldots, s_{in}]^\top \in \mathbb{Z}^n$, respectively, where $w_i(\bm{x})$ characterizes the reaction rate parameterized by the rate constant $\theta_i$, and $\bm{s}_i$ represents the change in molecular copy numbers by reaction $\mathcal{R}_i$.
Assuming mass-action kinetics, the propensity function $w_i(\bm{x})$ for each reaction $\mathcal{R}_i$ can be represented 
by either of the reaction types listed in Table~\ref{propent}. 

\begin{table}[b]
  \centering
  \caption{Elementary reactions and propensity functions}
  \label{propent}
  \begin{tabular}{ccc}
      \hline
      Name & Reaction & Propensity function \\ \hline\hline
      zero-th order & $\emptyset \rightarrow \text{Products}$ & $\theta_i$ \\
      unimolecular & $X_j \rightarrow \text{Products}$ & $\theta_ix_j$ \\
            homo-bimolecular & $2X_j \rightarrow \text{Products}$ & $\theta_ix_j(x_j-1)/{2}$ \\
      hetero-bimolecular & $X_j + X_k \rightarrow\text{Products}$ & $\theta_ix_jx_k$ \\
 \hline
  \end{tabular}
\end{table}

The stochastic evolution of the copy numbers $\bm{x}$ is modeled by a continuous-time  Markov process on the discrete state space $\mathcal{X}$. 
The time evolution of the probability distribution $\mathbb{P}(\bm{x})$, or the copy number distribution, is governed by the Chemical Master Equation (CME) \cite{gillespie1992rigorous}. 
Correspondingly, the 
dynamics of its  
moments $\mathbb{E}[\bm{x}^{\bm{\alpha}}]$ are given by the following moment equations\cite{khammash2022cybergenetics}. 
\begin{align}\label{momeq1}
  \begin{split}
    \frac{d}{dt} \mathbb{E}[\bm{x}^{\bm{\alpha}}]
    =\sum_{i=1}^{r}\mathbb{E}[\{(\bm{x}+\bm{s}_i)^{\bm{\alpha}}-\bm{x}^{\bm{\alpha}}\}w_i(\bm{x})].
  \end{split}
\end{align}
Since each propensity function $w_i(\bm{x})$ is a polynomial in $\bm{x}$ (see Table \ref{propent}), the moment equation becomes a linear ODE with respect to the moments. However, the right-hand side of the equation depends on the moments of order $|\bm{\alpha}|+1$ when there is a homo- or hetero-bimolecular reaction since $w_i(\bm{x})$ is a quadratic function. 
As a consequence, eq.~\eqref{momeq1} constitutes an infinite hierarchy of equations, making  exact analytic solutions intractable in practical reaction systems. 
The resulting infinite-dimensional system poses a fundamental challenge for the rigorous analysis of stochastic chemical systems. 

Throughout this paper, we make the following assumptions to guarantee the well-posedness of the moment equations.
\begin{assumption}
    (i) The CME admits a stationary distribution, (ii) the Markov chain is non-explosive, (iii) all moments $E[\bm{x}^{\bm{\alpha}}]$ converge to finite values.
\end{assumption}

\subsection{Semidefinite programming for bounding moments}

In what follows, we restrict our attention to \textit{stationary} moments and study the problem of computing their upper and lower bounds without resorting to approximation schemes such as moment closures \cite{naasell2003extension, hespanha2008moment, singh2010approximate}. 
The key idea is to use necessary conditions satisfied by stationary moments and to compute the bounds by optimizing over the set of moment vectors consistent with these conditions. It is known that this approach leads to a semidefinite program and can provide practically useful bounds for a broad class of reaction systems \cite{sakurai2017convex,sakurai2018optimization,sakurai2022interval,dowdy2018bounds,kuntz2019bounding,ghusinga2017exact,li2025moment}.

Specifically, stationary moments must satisfy the truncated stationary moment equations. 
Due to the linearity of the moment equation \eqref{momeq1}, the truncated moment equations can be written as an underdetermined linear system $\bm{a}_{\bm{\alpha}}^{\top} \bm{m} = 0\   (|\bm{\alpha}| \le \mu)$, where 
$\mu$ is the truncation order of the moment equation, $\bm{a}_{\bm{\alpha}}^{\top}$ is a multi-indexed constant vector determined from eq. \eqref{momeq1}, and $\bm{m}$ is the variables corresponding to all moments up to degree $\mu+1$. 

Moreover, the non-negativity of the probability and the support $\mathcal{X}$ of the probability distribution leads to additional necessary conditions
\begin{align}
    &\mathbb{E}[\bm{v}(\mathcal{M}_{\lceil \mu/2 \rceil})\bm{v}(\mathcal{M}_{\lceil \mu/2 \rceil})^{\top}] \succeq O,\\   
    &\mathbb{E}[x_k\bm{v}(\mathcal{M}_{\lceil (\mu-1)/2 \rceil})\bm{v}(\mathcal{M}_{\lceil (\mu-1)/2 \rceil})^{\top}] \succeq O,
\end{align}
for $k=1,2,\ldots, n$ (see also Notations in Section 1).
Using the variable $\bm{m}$, these conditions are more formally written as 
\begin{align}
&M(\bm{m}):= \mathcal{L}(\bm{v}(\mathcal{M}_{\lceil \mu/2 \rceil})\bm{v}(\mathcal{M}_{\lceil \mu/2 \rceil})^{\top})\succeq O, \label{eq:mom_mat}\\ 
&N^{(k)}(\bm{m}) := \mathcal{L}(x_k\bm{v}(\mathcal{M}_{\lceil (\mu-1)/2 \rceil})\bm{v}(\mathcal{M}_{\lceil (\mu-1)/2 \rceil})^{\top}) \succeq O,\label{eq:loc_mom_mat}
\end{align}
for $k=1,2,\ldots,n$, where $\mathcal{L}: \mathbb{R}[\bm{x}] \rightarrow \mathbb{R}$ is a linear functional, applied entrywise, such that 
\begin{equation}\label{eq:def_L}
  (\bm{m})_{\bm{\alpha}} = \mathcal{L}(\bm{x}^{\bm{\alpha}}),
\end{equation}
where $(\bm{m})_{\bm{\alpha}}$ denotes the entry of $\bm{m}$ associated with the multi-index $\bm{\alpha}$.
These observations lead to the following optimization program for bounding the stationary moments. 
 
\begin{theorem}\label{thm:con_momopt}\cite{sakurai2017convex,sakurai2018optimization}
Consider a stochastic chemical reaction network with given
stoichiometry $\{\bm{s}_i\}_{i=1}^{r}$ and propensity functions $\{w_{i}(\bm{x})\}_{i=1}^{r}$. Suppose the reaction network satisfies Assumption 1,
and let $\bm{m}^*$ denote the vector of the stationary moments. 
The optimal value of the following maximization (minimization, resp.) problem $\rho_{\max}$ ($\rho_{\min}$, resp.) satisfies $\rho_{\min} \le \bm{c}^\top \bm{m}^* \le \rho_{\max}$ for a given constant vector $\bm{c}$.
\begin{align}
\rho_{\max} = &\max_{\bm{m}} \ \bm{c}^\top \bm{m}\ \ (\rho_{\min} = \min_{\bm{m}} \ \bm{c}^\top \bm{m}) \\ &\text{subject to}\notag \\
&    0=\bm{a}_{\bm{\alpha}}^{\top}\bm{m} \quad (|\bm{\alpha}|\leq \mu)\label{eq:momeq_m}\\
& M(\bm{m}) \succeq O, N^{(k)}(\bm{m}) \succeq O \ (k=1,2,\cdots,n),
\end{align}
\end{theorem}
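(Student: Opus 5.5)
The plan is to show that the true stationary moment vector $\bm{m}^*$ is feasible for the optimization problem. Once that holds, $\rho_{\max}\ge \bm{c}^\top\bm{m}^*$ because $\rho_{\max}$ is a supremum over a set containing $\bm{m}^*$, and likewise $\rho_{\min}\le \bm{c}^\top\bm{m}^*$. So the whole argument reduces to checking the two families of constraints at $\bm{m}=\bm{m}^*$. Here $\bm{m}^*$ has entries $(\bm{m}^*)_{\bm{\alpha}}=\mathbb{E}[\bm{x}^{\bm{\alpha}}]$, taken under the stationary distribution $\mathbb{P}$, for all $|\bm{\alpha}|\le \mu+1$. Equivalently, $\mathcal{L}$ is the expectation functional restricted to polynomials of degree at most $\mu+1$. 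Assumption 1(iii) ensures these entries are finite.

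\textbf{Step 1: the equality constraints.} Under Assumption 1(i)--(ii), the stationary distribution $\mathbb{P}$ satisfies $\sum_{\bm{x}} (\mathcal{A}f)(\bm{x})\mathbb{P}(\bm{x})=0$ for polynomial test functions $f(\bm{x})=\bm{x}^{\bm{\alpha}}$. Here $\mathcal{A}f(\bm{x})=\sum_i w_i(\bm{x})\{f(\bm{x}+\bm{s}_i)-f(\bm{x})\}$ is the generator of the Markov chain. This is exactly the stationary version of \eqref{momeq1}, i.e. $\frac{d}{dt}\mathbb{E}[\bm{x}^{\bm{\alpha}}]=0$. Expanding the right-hand side, which is a polynomial of degree at most $|\bm{\alpha}|+1\le\mu+1$, and using linearity of expectation gives $\bm{a}_{\bm{\alpha}}^\top\bm{m}^*=0$ for $|\bm{\alpha}|\le\mu$.

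I expect this step to be the main obstacle, because the generator applies to unbounded functions. Interchanging the infinite sum over $\mathcal{X}$ with differentiation, or justifying the stationarity identity for polynomials, needs non-explosiveness together with finiteness of the relevant moments, which Assumption 1 supplies. In keeping with the cited works, I would invoke this as the standard consequence of Assumption 1 rather than reprove it.

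\textbf{Step 2: the semidefinite constraints.} Fix any real vector $\bm{y}$ of compatible dimension. By linearity of $\mathbb{E}$,
\begin{equation*}
\bm{y}^\top M(\bm{m}^*)\bm{y}=\mathbb{E}\big[(\bm{y}^\top\bm{v}(\mathcal{M}_{\lceil\mu/2\rceil}))^2\big]\ge 0,
\end{equation*}
since the integrand is a square and $\mathbb{P}\ge 0$. Similarly,
\begin{equation*}
\bm{y}^\top N^{(k)}(\bm{m}^*)\bm{y}=\mathbb{E}\big[x_k(\bm{y}^\top\bm{v}(\mathcal{M}_{\lceil(\mu-1)/2\rceil}))^2\big]\ge 0,
\end{equation*}
because $x_k\ge 0$ on the support $\mathcal{X}\subseteq\mathbb{N}_0^n$. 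Hence $M(\bm{m}^*)\succeq O$ and $N^{(k)}(\bm{m}^*)\succeq O$ for every $k$.

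Combining the two steps, $\bm{m}^*$ is feasible for the problem in Theorem~\ref{thm:con_momopt}, and the bounds $\rho_{\min}\le\bm{c}^\top\bm{m}^*\le\rho_{\max}$ follow immediately. One bookkeeping point remains: the matrix entries must involve only moments that are components of $\bm{m}$. This needs degrees $2\lceil\mu/2\rceil\le\mu+1$ and $1+2\lceil(\mu-1)/2\rceil\le\mu+1$, and I would check both inequalities for $\mu$ even and $\mu$ odd.
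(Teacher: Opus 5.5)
Your proposal is correct and takes the same route the paper relies on. The paper gives no standalone proof (it cites \cite{sakurai2017convex,sakurai2018optimization}), but both its derivation of the constraints and the proof of Theorem~\ref{thm:proposed_opt} rest on exactly your point: $\bm{m}^*$ is feasible, because stationary moments satisfy the truncated stationary moment equations and $\mathbb{E}[(\bm{y}^\top\bm{v})^2]\ge 0$, $\mathbb{E}[x_k(\bm{y}^\top\bm{v})^2]\ge 0$ on $\mathcal{X}\subseteq\mathbb{N}_0^n$. Your degree check, $2\lceil\mu/2\rceil\le\mu+1$ and $1+2\lceil(\mu-1)/2\rceil\le\mu+1$ (with equality for odd and even $\mu$ respectively), is a useful detail the paper leaves implicit.
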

The optimization problem in Theorem 1 can be formulated as a semidefinite program (SDP). In general, increasing the truncation order $\mu$ provides progressively tighter bounds. 
However, the sizes of the \textit{moment matrix} $M$ and the \textit{localizing matrices} $N^{(k)} (k=1,2,\ldots,n)$ grow combinatorially with the number of molecular species $n$ and the truncation order $\mu$ as can be seen from \eqref{eq:mom_mat} and \eqref{eq:loc_mom_mat}. 
As a result, the computation cost rapidly becomes prohibitive, limiting the range of reaction systems that can be analyzed in this framework. 
This motivates the development of computation reduction techniques for the SDP in Theorem 1. 
\begin{problem}\label{pb:mot_proposed}
Consider the optimization problem in Theorem 1. Construct a reformulation that reduces the dimensions of the semidefinite constraints by exploiting sparsity structures induced by the moment equations. 
\end{problem}

\section{Matrix decomposition using reaction-wise sparsity}
In this section, we propose 
a sparsity-exploiting matrix decomposition of the moment and the localizing matrices, $M$ and $N^{(k)}~(k=1,2,\ldots,n)$.
By analyzing the moment equations on a reaction-wise basis, we show that each reaction involves only a limited subset of moments in these matrices. 
Exploiting this structure, 
the semidefinite constraints are reformulated using smaller matrices, resulting in a collection of reduced-size semidefinite constraints. 

\subsection{Reaction-wise sparsity of moment equations}
To reveal the reaction-wise sparsity structure, 
we first rewrite the stationary moment equation $0=\bm{a}_{\bm{\alpha}}^{\top}\bm{m}$ in \eqref{eq:momeq_m} as a sum of individual reaction components. This formulation directly follows from \eqref{momeq1}, where each term is associated with a reaction $\mathcal{R}_i$. 
We 
then express them in an equivalent matrix form using Frobenius inner products to relate the equality constraints to the moment and localizing matrices $M$ and $N^{(k)}$. 
Specifically, 
\begin{align}\label{eq:momdouti_reactionwise}
  0 = 
  \bm{a}_{\bm{\alpha}}^{\top}\bm{m} =
  \sum_{i=1}^{r} \langle A_{i}(\bm{\alpha}), M \rangle
  + \sum_{i=1}^{r}\sum_{k\in \mathcal{K}_i} \langle A^{(k)}_{i}(\bm{\alpha}), N^{(k)} \rangle,
\end{align}
where the set $\mathcal{K}_i$ denotes the collection of indices corresponding to the reactant species involved in reaction $\mathcal{R}_i$, and
$A_i$ and 
$A_{i}^{(k)}$ are coefficient matrices determined by $w_i(\bm{x})$ and $\bm{s}_i$.
It should be recalled that equality constraints are expressed using the trace inner product form in the standard SDP formulation. 

In general, the coefficients $A_{i}$ and $A_{i}^{(k)}$ are not uniquely determined. 
However, a key observation here is that there are entries in $A_{i}$ and $A_{i}^{(k)}$ that are necessarily zero since each reaction involves at most two molecular species as reactants and $w_i(\bm{x})$ is the polynomial of at most two variables (Table \ref{propent}). 
As shown in the next subsection, this structural sparsity allows an equivalent reformulation of eq. \eqref{eq:momdouti_reactionwise} in which entries corresponding to the structurally zero components are omitted, leading to smaller semidefinite matrices. 
The following theorem characterizes a set of moments that appear in each term of eq. \eqref{momeq1} up to truncation order $\mu$..

\begin{theorem}\label{thm:seiseikou}
Consider the $i$-th term of the moment equation for 
$\mathbb{E}[\bm{x}^{\bm{\alpha}}]$ given in 
eq.~\eqref{momeq1}, \textit{i.e.,} 
$f_{i, \bm{\alpha}} := \mathbb{E}[\{(\bm{x}+\bm{s}_i)^{\bm{\alpha}}-\bm{x}^{\bm{\alpha}}\}w_i(\bm{x})]$. 
A moment $\mathbb{E}[\bm{x}^{\bm{\eta}}]$ appears in $f_{i,\bm{\alpha}}$ for some $\bm{\alpha}$ with $|\bm{\alpha}| \le \mu$
if and only if $\bm{x}^{\bm{\eta}} \in \Xi_i$, where 
\begin{align}\label{eq:def_Xi_i}
   \Xi_{i} := \left\{ \bm{x}^{\bm{\beta}+\bm{\gamma}} ~\middle|~
   \begin{array}{l}
         \bm{\beta}, \bm{\gamma}\in \mathbb{N}_0^n, \ 0\leq|\bm{\beta}| \leq \mu-1, \\
         1\leq\gamma_j\leq d_j \ (j\in \mathcal{K}_i),\\
        \gamma_j=0 \ (j\notin \mathcal{K}_i) \\
   \end{array}
   \right\},
\end{align}
and $d_j$ 
is the $j$-th component of the multi-index $\bm{d} \in \mathbb{N}_0^n$ for the highest-degree monomial $\bm{x}^{\bm{d}}$ in the propensity function $w_i(\bm{x})$.
\end{theorem}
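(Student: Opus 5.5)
The plan is to expand $f_{i,\bm{\alpha}}$ explicitly as a polynomial and read off which monomials can occur; the statement is an ``if and only if'', so I treat the two inclusions separately. First, by the multinomial expansion,
\begin{align*}
(\bm{x}+\bm{s}_i)^{\bm{\alpha}}-\bm{x}^{\bm{\alpha}}=\sum_{\bm{\beta}\le\bm{\alpha},\,\bm{\beta}\neq\bm{\alpha}}\Big(\prod_{j=1}^n\binom{\alpha_j}{\beta_j}s_{ij}^{\alpha_j-\beta_j}\Big)\bm{x}^{\bm{\beta}},
\end{align*}
where $\bm{\beta}\le\bm{\alpha}$ is componentwise. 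Every $\bm{\beta}$ appearing here has $|\bm{\beta}|\le|\bm{\alpha}|-1\le\mu-1$, and only those $\bm{\beta}$ with $\alpha_j=\beta_j$ whenever $s_{ij}=0$ survive. Second, I check from Table~\ref{propent}, case by case, that every monomial $\bm{x}^{\bm{\gamma}}$ of $w_i(\bm{x})$ satisfies $\gamma_j=0$ for $j\notin\mathcal{K}_i$ and $1\le\gamma_j\le d_j$ for $j\in\mathcal{K}_i$:
\begin{itemize}
\item zero-th order: $\mathcal{K}_i=\emptyset$, $\bm{\gamma}=\bm{0}$;
\item unimolecular: $\gamma_j=1$;
\item hetero-bimolecular: $\gamma_j=\gamma_k=1$;
\item homo-bimolecular: $x_j(x_j-1)/2$ has monomials $x_j^2$ and $x_j$, so $\gamma_j\in\{1,2\}$ with $d_j=2$.
\end{itemize}
Multiplying the two expansions, every monomial of $f_{i,\bm{\alpha}}$ has the form $\bm{x}^{\bm{\beta}+\bm{\gamma}}$ with $\bm{\beta},\bm{\gamma}$ as in \eqref{eq:def_Xi_i}. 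This gives the ``only if'' direction directly, with no concern about cancellation, since we only need containment.

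For the ``if'' direction, fix $\bm{\beta}$ with $|\bm{\beta}|\le\mu-1$ and an admissible $\bm{\gamma}$. I must exhibit $\bm{\alpha}$ with $|\bm{\alpha}|\le\mu$ such that the coefficient of $\bm{x}^{\bm{\beta}+\bm{\gamma}}$ in $f_{i,\bm{\alpha}}$ is nonzero.
\begin{itemize}
\item \textbf{Choice of $\bm{\alpha}$.} Pick an index $l$ with $s_{il}\neq0$; a reaction with $\bm{s}_i=\bm{0}$ is trivial and will be excluded or noted. Set $\bm{\alpha}=\bm{\beta}+\bm{e}_l$, so $|\bm{\alpha}|\le\mu$. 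The term with the difference monomial $\bm{x}^{\bm{\beta}}$ has coefficient $\alpha_l s_{il}\neq0$. Multiplying by the monomial $\theta_i c_{\bm{\gamma}}\bm{x}^{\bm{\gamma}}$ of $w_i$, with $c_{\bm{\gamma}}\ne0$, contributes $\alpha_l s_{il}\theta_i c_{\bm{\gamma}}$ to the coefficient of $\bm{x}^{\bm{\beta}+\bm{\gamma}}$.
\item \textbf{No cancellation.} I must rule out other pairs $(\bm{\beta}',\bm{\gamma}')$ with $\bm{\beta}'+\bm{\gamma}'=\bm{\beta}+\bm{\gamma}$, $\bm{\beta}'\le\bm{\alpha}$, $\bm{\beta}'\ne\bm{\alpha}$, contributing the same monomial. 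Since $|\bm{\beta}'|\le|\bm{\alpha}|-1=|\bm{\beta}|$, any competing pair must have $|\bm{\gamma}'|\ge|\bm{\gamma}|$. In the zero-th order, unimolecular and hetero-bimolecular cases, $w_i$ has a single monomial, so $\bm{\gamma}'=\bm{\gamma}$ and hence $\bm{\beta}'=\bm{\beta}$; there is no competitor. In the homo-bimolecular case with $\bm{\gamma}=2\bm{e}_j$, again $\bm{\gamma}'=\bm{\gamma}$ is forced by degree.
\end{itemize}

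The remaining case, which I expect to be the main obstacle, is the homo-bimolecular reaction with $\bm{\gamma}=\bm{e}_j$. The competitor is $\bm{\gamma}'=2\bm{e}_j$ with $\bm{\beta}'=\bm{\beta}-\bm{e}_j$. This exists only if $\beta_j\ge1$, and its coefficient need not vanish. Here I plan to avoid a delicate cancellation analysis: when $\beta_j\ge1$, rewrite the target monomial as $\bm{x}^{(\bm{\beta}-\bm{e}_j)+2\bm{e}_j}$, i.e.\ a representative with $\bm{\gamma}=\bm{d}$, which is already covered. When $\beta_j=0$ the competitor does not exist, so the coefficient is exactly $-\alpha_l s_{il}\theta_i/2\neq0$. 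Thus every element of $\Xi_i$ is reached either through a $\bm{\gamma}=\bm{d}$ representative or through a representative with no competitor. The only care needed is bookkeeping to confirm that the chosen $\bm{\alpha}$ still satisfies $|\bm{\alpha}|\le\mu$ after such a rewriting, which holds because rewriting only decreases $|\bm{\beta}|$.
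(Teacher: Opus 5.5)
Your proof is correct, and its skeleton is the paper's. Like the paper, you expand $(\bm{x}+\bm{s}_i)^{\bm{\alpha}}-\bm{x}^{\bm{\alpha}}$ and $w_i$, get containment in $\Xi_i$, and recover all of $\Xi_i$ as a union over $|\bm{\alpha}|\le\mu$ by realizing each $\bm{\beta}$ through $\bm{\alpha}=\bm{\beta}+\bm{e}_l$ with $s_{il}\neq 0$.

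The difference is in the ``if'' direction. The paper asserts that the monomials of $f_{i,\bm{\alpha}}$ are exactly its set $\Xi_{\bm{\alpha},i}$ and never considers cancellation. You instead exhibit a coefficient that is explicitly nonzero.

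That extra care is needed, because for a fixed $\bm{\alpha}$ cancellation really occurs in the homo-bimolecular case. Take $2X_1\to 3X_1$, so $s_{i1}=1$. Then $\{(x_1+1)^3-x_1^3\}x_1(x_1-1)=3x_1^4-2x_1^2-x_1$, so $\mathbb{E}[x_1^3]$ is absent from $f_{i,3}$ even though $x_1^3\in\Xi_{3,i}$. The paper's per-$\bm{\alpha}$ identification is therefore only a containment.

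Your switch to the representative with $\bm{\gamma}=\bm{d}$ fixes this. Here $x_1^3=x_1^{1}\cdot x_1^{2}$ is realized in $f_{i,2}$ with coefficient $\theta_i$. This is precisely the step that makes the union argument rigorous. Without it, the naive choice $\bm{\alpha}=\bm{\beta}+\bm{e}_l$ from the representative $(\bm{\beta},\bm{\gamma})=(2,1)$ would land on the cancelling case $\bm{\alpha}=3$.

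Both your argument and the paper's tacitly require $\bm{s}_i\neq\bm{0}$, i.e.\ $\mathcal{S}_i\neq\emptyset$, for the ``if'' direction. You correctly flag this.
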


\proof
     The stoichiometric vector $\bm{s}_i$ represents the change in the copy number of each molecular species by reaction $\mathcal{R}_i$. 
Hence, only the components corresponding to the reactant or product species of $\mathcal{R}_i$ can be nonzero.
Thus, 
     \begin{align}
    (\bm{x}+\bm{s}_i)^{\bm{\alpha}}-\bm{x}^{\bm{\alpha}}= \prod_{j\in \mathcal{S}_i}&(x_j+s_{ij})^{\alpha_j}\prod_{j\notin \mathcal{S}_i}x_j^{\alpha_j} - \bm{x}^{\bm{\alpha}}\notag
\end{align}
holds, where $\mathcal{S}_i = \{j \mid s_{ij} \neq 0\}$. Moreover, 
$w_i(\bm{x})$ is one of the functions shown in Table \ref{propent}.
Consequently, all monomials in $f_{i,\bm{\alpha}}$ can be identified as 
\begin{align}
   \Xi_{\bm{\alpha},i} :=  \left\{ \bm{x}^{\bm{\beta}+\bm{\gamma}} ~\middle|~ 
   \begin{array}{l}
         \bm{\beta}, \bm{\gamma}\in \mathbb{N}_0^n, \ 0\leq|\bm{\beta}| \leq |\bm{\alpha}|-1,\\
         0\leq\beta_j \leq \alpha_j \ (j \in \mathcal{S}_i), \\
        \beta_j = \alpha_j \ (j \notin \mathcal{S}_i),\\
        1\leq\gamma_j\leq d_j\  (j\in \mathcal{K}_i), \\
        \gamma_j=0 \ (j\notin \mathcal{K}_i)\\
   \end{array}
   \right\}.\notag
\end{align}
For a fixed $i$, all moments  appeared in $f_{i, \bm{\alpha}}$ with $|\bm{\alpha}| \le \mu$ is given $\mathbb{E}[\bm{x}^{\bm{\eta}}]$ with $\bm{x}^{\bm{\eta}} \in \bigcup_{|\bm{\alpha}| \leq \mu}\Xi_{\bm{\alpha},i}$. 
In this union, the condition on $\bm{\gamma}$ is independent of $\bm{\alpha}$, while the condition on $\bm{\beta}$ generates all monomials up to degree $\mu-1$. Therefore, we have
$\Xi_{i} = \bigcup_{|\bm{\alpha}| \leq \mu}\Xi_{\bm{\alpha},i}$, which is eq. \eqref{eq:def_Xi_i}.
\endproof

\medskip
This theorem implies that only a subset of moments appear in the $i$-th term of the moment equation \eqref{eq:momdouti_reactionwise} depending on $w_i(\bm{x})$. 
In particular, eq. \eqref{eq:def_Xi_i} implies the existence of a set of monomials $\bar{\Phi}_i \subseteq \mathcal{M}_{\lceil \mu/2 \rceil}$ such that 
\begin{align}\label{eq:const_bar_Phi}
z_1, z_2 \in \bar{\Phi}_i \implies 
z_1 z_2 \notin \Xi_i.
\end{align}
We define $\bar{\Phi}_i$ as the largest set satisfying \eqref{eq:const_bar_Phi} and $\Phi_i :=  \mathcal{M}_{\lceil \mu/2 \rceil} \backslash \bar{\Phi}_i$, where a constructive definition is also shown in Appendix \ref{app:def_Phi_i}. 
Based on this partition of the monomial set $\mathcal{M}_{\lceil \mu/2 \rceil}$, we permute the moment matrix $M$ as 
\begin{align}
\hat{M}_i(\bm{m}) := \mathcal{L}\left[
\begin{bmatrix}
\bm{v}(\Phi_i) \\
\bm{v}(\bar{\Phi}_i)
\end{bmatrix}
\begin{bmatrix}
\bm{v}^\top(\Phi_i) & \bm{v}^\top(\bar{\Phi}_i)
\end{bmatrix}
\right]\notag
\end{align}
by reordering the monomial basis. 
An important observation is that, by definition, the entries of the lower-right block $\bm{v}(\bar{\Phi}_i)\bm{v}(\bar{\Phi}_i)^\top$ do not appear in the $i$-th term of the moment equation. 
This \textit{sparsity} of the matrix $\hat{M}_i$ will be a key to the reduction of the moment matrix in the optimization constraint. 
Similarly, we define a permuted matrix of $\hat{N}^{(k)}$ by 
\begin{align}
\hat{N}_i^{(k)}(\bm{m}) := \mathcal{L}\left[
\begin{bmatrix}
\bm{v}(\Omega_i^{(k)}) \\
\bm{v}(\bar{\Omega}_i^{(k)})
\end{bmatrix}
\begin{bmatrix}
\bm{v}^\top(\Omega_i^{(k)}) & \bm{v}^\top(\bar{\Omega}_i^{(k)})
\end{bmatrix}
\right],\notag
\end{align}
where $\bar{\Omega}_i^{(k)} \subseteq \mathcal{M}_{\lceil (\mu-1)/2 \rceil}$ is the largest set such that
\begin{align}\label{eq:const_bar_Omega}
z_1, z_2 \in \bar{\Omega}_{i}^{(k)} \implies 
x_kz_1 z_2 \notin \Xi_i
\end{align}
holds, and $\Omega_i^{(k)} := \mathcal{M}_{\lceil (\mu-1)/2 \rceil} \setminus \bar{\Omega}_i^{(k)}$ (see Appendix \ref{app:def_Omega_i} for a constructive definition).
\subsection{Sparsity-based reformulation of the constraints}

Based on the permuted matrices defined 
in the previous subsection, we will now formulate 
a computationally efficient relaxation problem. 
For this purpose, we consider a partition of the monomial set $\bar{\Phi}_i$ and $\bar{\Omega}_i^{(k)}$ such that 
\begin{align}\label{eq:set_def_bar}
\bar{\Phi}_i = \bigcup_{j=1}^{p_i} \bar{\Phi}_{i,j} \quad \mathrm{and} \quad \bar{\Omega}_i^{(k)} = \bigcup_{j=1}^{q_i} \bar{\Omega}_{i,j}^{(k)},
\end{align}
respectively. 
Using these partitioned monomial sets $\bar{\Phi}_{i,j}$ and $\bar{\Omega}_{i,j}^{(k)}$, we define reduced moment matrices by 
\begin{align}
&\hat{M}_{i,j} :=  \mathcal{L}(  \bm{v}(\Phi_i \cup \bar{\Phi}_{i,j})  \bm{v}(\Phi_i \cup \bar{\Phi}_{i,j})^\top )  \label{eq:hatMij}\\
&\hat{N}^{(k)}_{i,j} :=  \mathcal{L}(x_k\bm{v}(\Omega_i^{(k)} \cup \bar{\Omega}_{i,j}^{(k)})  \bm{v}(\Omega_i^{(k)} \cup \bar{\Omega}_{i,j}^{(k)})^\top ), \label{eq:hatNij}
\end{align}
where $j=1,2\ldots,p_i$ for \eqref{eq:hatMij}, and $j=1,2\ldots,q_i$ and $k \in \mathcal{K}_i$ for \eqref{eq:hatNij}.

The following lemma shows that the $i$-th terms of the moment equation \eqref{eq:momdouti_reactionwise} can be represented by these reduced matrices.

\begin{lemma}\label{lemma:equality_coef}
Consider the reformulated moment equation \eqref{eq:momdouti_reactionwise}. Let $\hat{M}_{i,j} \in \mathbb{R}^{(|\Phi_i|+|\bar{\Phi}_{i,j}|)\times(|\Phi_i|+|\bar{\Phi}_{i,j}|)}$ and $\hat{N}_{i,j}^{(k)} \in \mathbb{R}^{(|\Omega_i^{(k)}| + |\Omega_{i,j}^{(k)}|) \times (|\Omega_i^{(k)}| + |\Omega_{i,j}^{(k)}|)}$ be given by eqs. \eqref{eq:hatMij} and \eqref{eq:hatNij}, respectively.  
Then, there exists coefficient matrices $\hat{A}_{i,j}(\bm{\alpha})$ and $\hat{A}^{(k)}_{i,j}(\bm{\alpha})$ such that the following equalities hold.
\begin{align}
&    \langle A_{i}(\bm{\alpha}), M \rangle = \sum_{j=1}^{p_i} \langle \hat{A}_{i,j}(\bm{\alpha}), \hat{M}_{i,j} \rangle,\label{eq:equi_tran_mom}\\
&\langle A^{(k)}_{i}(\bm{\alpha}), N^{(k)} \rangle = \sum_{j=1}^{q_i} \langle \hat{A}_{i,j}^{(k)}(\bm{\alpha}), \hat{N}_{i,j}^{(k)} \rangle\label{eq:equi_tran_loc}
\end{align}
\end{lemma}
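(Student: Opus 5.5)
The plan is to construct $\hat{A}_{i,j}(\bm{\alpha})$ and $\hat{A}^{(k)}_{i,j}(\bm{\alpha})$ explicitly, using the fact that the coefficient matrices $A_i(\bm{\alpha})$ in \eqref{eq:momdouti_reactionwise} are not unique. First I fix a convenient choice of $A_i(\bm{\alpha})$. Expand $f_{i,\bm{\alpha}}$ as a linear combination $\sum_{\bm{\eta}} c_{\bm{\eta}}\,\mathbb{E}[\bm{x}^{\bm{\eta}}]$, where by Theorem~\ref{thm:seiseikou} only monomials $\bm{x}^{\bm{\eta}} \in \Xi_i$ occur. For each such $\bm{\eta}$ that is assigned to $M$, pick a single entry $(z_1,z_2)$ of $\hat{M}_i$ with $z_1z_2=\bm{x}^{\bm{\eta}}$ and put the coefficient $c_{\bm{\eta}}$ there, symmetrized as $c_{\bm{\eta}}/2$ at $(z_1,z_2)$ and at $(z_2,z_1)$. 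Since $M$ and $\hat{M}_i$ differ only by a simultaneous permutation of rows and columns, $\langle A_i(\bm{\alpha}),M\rangle=\langle P A_i(\bm{\alpha})P^\top,\hat{M}_i\rangle$. So it suffices to work with $\hat{M}_i$.

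The key step is to show that this entry can always be chosen outside the lower-right block $\bar{\Phi}_i\times\bar{\Phi}_i$. By the defining property \eqref{eq:const_bar_Phi}, any product $z_1z_2$ with both factors in $\bar{\Phi}_i$ lies outside $\Xi_i$. Hence every pair $(z_1,z_2)$ with $z_1z_2\in\Xi_i$ has at least one factor in $\Phi_i$, so the chosen entry lies in $\Phi_i\times\Phi_i$ or in $\Phi_i\times\bar{\Phi}_{i,j}$ (or its transpose) for some $j$, using the partition \eqref{eq:set_def_bar}. I then define $\hat{A}_{i,j}(\bm{\alpha})$ as the restriction of the permuted coefficient matrix to the rows and columns indexed by $\Phi_i\cup\bar{\Phi}_{i,j}$, with two adjustments:
\begin{itemize}
\item entries in the off-diagonal blocks $\Phi_i\times\bar{\Phi}_{i,j}$ are kept only in the matrix with the same index $j$;
\item entries in the common block $\Phi_i\times\Phi_i$ are assigned to a single index, say $j=1$, or split with weights summing to one, so that nothing is counted twice.
\end{itemize}
Because $\hat{M}_{i,j}$ is exactly the principal submatrix of $\hat{M}_i$ on $\Phi_i\cup\bar{\Phi}_{i,j}$, summing $\langle\hat{A}_{i,j}(\bm{\alpha}),\hat{M}_{i,j}\rangle$ over $j$ recovers every nonzero term of $\langle A_i(\bm{\alpha}),M\rangle$ exactly once, which gives \eqref{eq:equi_tran_mom}.

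For the localizing matrices the argument is the same, with $z_1z_2$ replaced by $x_kz_1z_2$ and \eqref{eq:const_bar_Omega} playing the role of \eqref{eq:const_bar_Phi}. This gives \eqref{eq:equi_tran_loc} with $\hat{N}^{(k)}_{i,j}$ as the principal submatrices of $\hat{N}^{(k)}_i$.

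The main obstacle is the choice of $A_i(\bm{\alpha})$ itself. The lemma is stated relative to \emph{the} coefficients in \eqref{eq:momdouti_reactionwise}, but a generic choice could place weight on an entry in $\bar{\Phi}_i\times\bar{\Phi}_i$ whose contributions cancel only after summation. I would handle this by noting that both sides of \eqref{eq:equi_tran_mom} are linear functionals of $\bm{m}$, and that the left side equals the $M$-part of $f_{i,\bm{\alpha}}$, a linear form supported on moments in $\Xi_i$. One then only needs equality as linear forms in $\bm{m}$, which the construction above provides. A subsidiary point is how moments of $\Xi_i$ are split between $M$ and the $N^{(k)}$. I would fix that split when defining $A_i$ and $A_i^{(k)}$, so that each monomial is represented in a matrix where it actually occurs, in $\mathcal{M}_{\lceil\mu/2\rceil}$-products or $x_k$-shifted products.
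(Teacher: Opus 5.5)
Your proposal is correct and follows essentially the paper's argument. You pass to the permuted matrix $\hat{M}_i$, use \eqref{eq:const_bar_Phi} to obtain a coefficient matrix with zero $\bar{\Phi}_i\times\bar{\Phi}_i$ block, distribute the $\Phi_i\times\bar{\Phi}_{i,j}$ blocks over the $\hat{M}_{i,j}$, and treat $\hat{N}^{(k)}_{i,j}$ the same way. You are also more explicit than the paper on two points it glosses over: the shared $\Phi_i\times\Phi_i$ block must go to only one $\hat{A}_{i,j}$ to avoid double counting, and the lemma needs a suitable choice of the non-unique $A_i(\bm{\alpha})$, $A_i^{(k)}(\bm{\alpha})$ whose $M$-part is supported on $\Xi_i$.
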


\proof
   The matrix $\hat{M}_{i}$ is obtained from the matrix $M$ by permuting the monomial basis. 
   Moreover, since the entries $\mathcal{L}(\bm{v}(\bar{\Phi}_i)\bm{v}(\bar{\Phi}_i)^\top)$ in $\hat{M}_i$ do not appear in the moment equation, there exists $\hat{A}_i(\bm{\alpha})$ whose bottom-right block is a zero matrix, satisfying 
    \begin{align}
        &\langle A_{i}(\bm{\alpha}), M \rangle = \langle \hat{A}_{i}(\bm{\alpha}), \hat{M}_i \rangle, \label{eq:equ_hatM_i}\\
        &\hat{A}_{i}(\bm{\alpha}) = 
        \begin{bmatrix}
           {\hat{A}}^i_{\bm{v}(\Phi_i),\bm{v}(\Phi_i)} & \bar{A}^i_{\bm{v}(\Phi_i),\bm{v}(\bar{\Phi}_{i})}\\
           \bar{A}^i_{\bm{v}(\bar{\Phi}_{i}),\bm{v}(\Phi_i)} & O
       \end{bmatrix}\label{eq:blo_arr_pattern},
    \end{align}
    where $\hat{A}^i_{\bm{p},\bm{q}}$ is a coefficient matrix composed of the entries of $\hat{A}_i$ that correspond to $\mathcal{L}(\bm{p}\bm{q}^\top)$, a subset of the elements in matrix $M$.
    Further, 
    it follows from eq.~\eqref{eq:blo_arr_pattern} that 
    \begin{align}
        \qty{z_1z_2 \middle| z_1 \in \Phi_i, z_2\in \bar{\Phi}_{i}} = \bigcup_{j=1}^{p_i} \qty{z_1z_2 \middle| z_1 \in \Phi_i, z_2\in \bar{\Phi}_{i,j}}.\notag
    \end{align}
Hence, for $\hat{M}_i$ and $\hat{M}_{i,j}$, there exists $\hat{A}_{i,j}(\bm{\alpha})$ satisfying eq. \eqref{eq:equi_tran_mom}.
The equality \eqref{eq:equi_tran_loc} follows from the same argument for $\hat{N}_i^{(k)}$ and $\hat{N}_{i,j}^{(k)}$. 
\endproof

\medskip
This lemma implies that the equality constraints of the original optimization problem in Theorem \ref{thm:con_momopt} can be equivalently formulated using the smaller moment matrices. 
\begin{figure}[t] 
  \begin{center}
    \includegraphics[width=\linewidth]{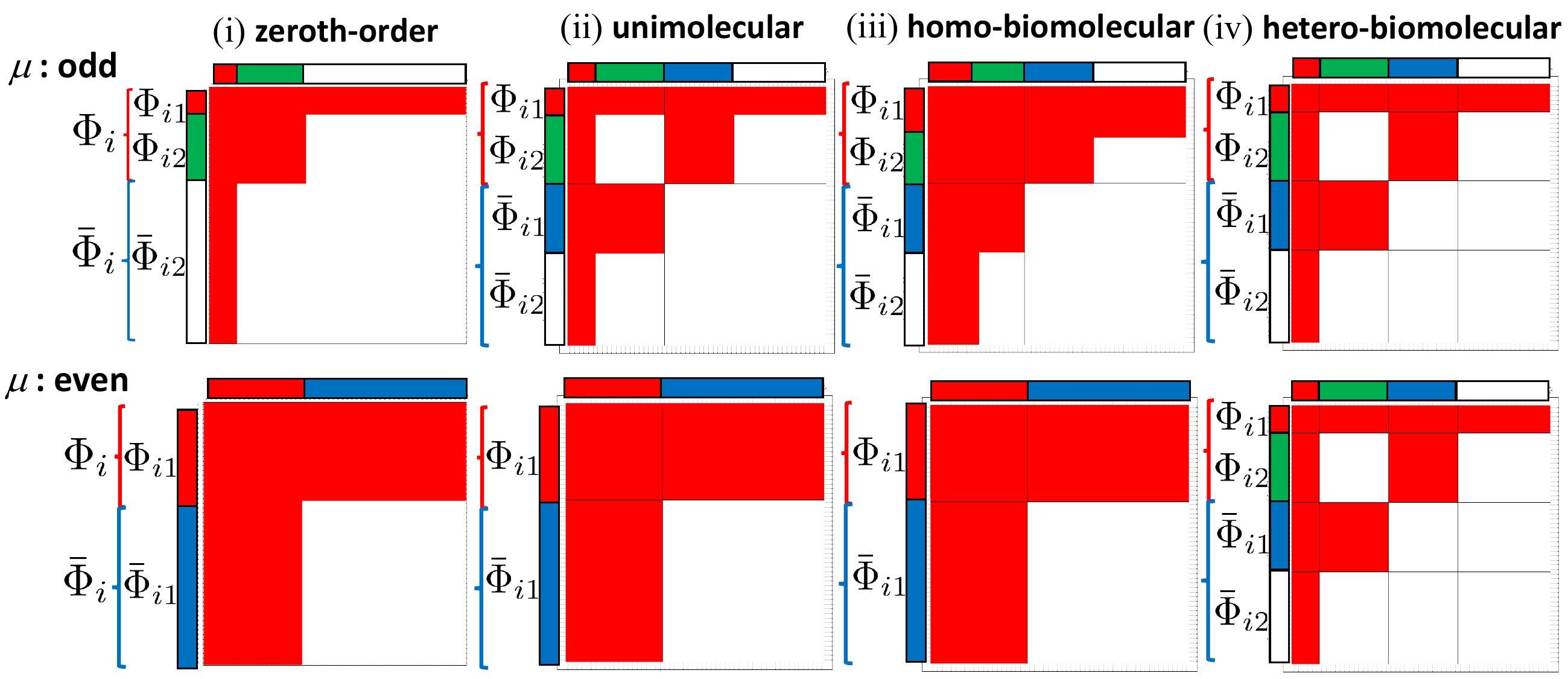}
  \end{center}
  \caption{Detailed sparsity pattern of the coefficient matrix $\hat{A}_i$ in eq. \eqref{eq:equ_hatM_i}}
  \label{fig:ASP_RW}
\end{figure}
\begin{remark}\label{remark:Phi_i1i2}
 A similar relationship holds for the cross moments constructed by $\Phi_i$ and $\bar{\Phi}_i$ as in eq.~\eqref{eq:const_bar_Phi}. Specifically, 
Let $\Phi_{i1}$ and $\bar{\Phi}_{i1}$ be the minimal monomial sets such that their complement sets, defined by $\Phi_{i2} = \Phi_i \setminus \Phi_{i1}$ and $\bar{\Phi}_{i2} = \bar{\Phi}_i \setminus \bar{\Phi}_{i1}$, satisfy
\begin{align}
    z_1 \in \Phi_{i2},  z_2 \in \bar{\Phi}_{i2} \implies z_1z_2 \notin \Xi_i. \label{eq:i2_const}
\end{align}
    Figure~\ref{fig:ASP_RW} illustrates the detailed sparsity pattern of $\hat{A}_i$ in eq. \eqref{eq:equ_hatM_i} based on these definitions. In the numerical demonstration in Section \ref{sec:num_dem}, partitioning of the monomial set for $\hat{M}_{i,j}$ is given based on 
    $\bar{\Phi}_{i1,j} (j=1,2,\cdots, p_{i1})$ and $\bar{\Phi}_{i2,j}(j=1,2,\cdots, p_{i2})$ associated with $\bar{\Phi}_{i1}$ and $\bar{\Phi}_{i2}$, respectively.
    Then, $p_{i1}$ and $p_{i2}$ are given such that $p_i \leq p_{i1}+p_{i2}$.
\end{remark}

\medskip
\noindent
\textbf{Example.}
Consider a dimerization reaction ($n=2$),
\begin{align}
  \emptyset \xrightarrow{\theta_1} X_1 \quad X_1 \xrightarrow{\theta_2} \emptyset \quad X_1+X_1\xrightarrow{\theta_3} X_2 \quad X_2 \xrightarrow{\theta_4} \emptyset,\notag 
\end{align}
and its truncated moment equations. 
For $\mu = 3$, the monomial basis for the moment matrix is $\bm{v}_{\lceil{3/2}\rceil} = 
[1, x_1, x_2, x_1^2, x_1x_2, x_2^2]^\top$. 
Theorem 2 implies that not all combinations of the monomials $\bm{v}_{\lceil{3/2}\rceil}$ appear in the $i$-th term of the moment equation \eqref{eq:momdouti_reactionwise}. For example, 
for the dimerization reaction $\mathcal{R}_3$, the third term of the moment equation in \eqref{eq:momdouti_reactionwise} is expressed as $\langle A_{3}, M \rangle$, where, for $\bm{\alpha}= [2,1]^{\top}$,
\begin{align}
 A_3(\bm{\alpha}) = \theta_3
 \begin{bmatrix}
     0 & 2 & 0 & -2 & -1 & 0\\
     2 & -4 & -1 & \frac{5}{2} & 2 & 0\\
     0 & -1 & 0 & 2 & 0 & 0\\
     -2 & \frac{5}{2} & 2 & -1 & -2 & 0\\
     -1 & 2 & 0 & -2 & 0 & 0\\
     0 & 0 & 0 & 0 & 0 & 0
 \end{bmatrix}, 
\end{align}
and $M$ is defined in \eqref{eq:mom_mat}. 
Thus, the unused monomial set is given by $\bar{\Phi}_3 = \{1, x_2, x_2^2$\}.
Partitioning $\bar{\Phi}_3$ into 
$\bar{\Phi}_{3,1} = \{1, x_2\}$ and $\bar{\Phi}_{3,2} = \{x_2^2\}$, we can rewrite $\langle A_{3}, M \rangle$ using smaller matrices 
$\hat{M}_{3,1} \in \mathbb{R}^{5 \times 5}$ as  
$\hat{M}_{3,2} \in \mathbb{R}^{4 \times 4}$ as 
$\langle A_{3}, M \rangle= \langle \hat{A}_{3,1}, \hat{M}_{3,1}\rangle 
 + \langle \hat{A}_{3,2}, \hat{M}_{3,2} \rangle$, where 
 \begin{align}
 &\hat{A}_{3,1}(\bm{\alpha}) = \theta_3
 \begin{bmatrix}
     0 & 2 & 0 & -2 & -1\\
     2 & -2 & -1 & \frac{5}{4} & 1\\
     0 & -1 & 0 & 2 & 0\\
     -2 & \frac{5}{4} & 2 & -\frac{1}{2} & -1\\
     -1 & 1 & 0 & -1 & 0
 \end{bmatrix}, \\
 &\hat{A}_{3,2}(\bm{\alpha}) =\theta_3
 \begin{bmatrix}
     -2 & \frac{5}{4} & 1 & 0\\
     \frac{5}{4} & -\frac{1}{2} & -1 & 0\\
     1 & -1 & 0 & 0\\
     0 & 0 & 0 & 0
 \end{bmatrix}
 .
 \end{align}
 $\hfill \Box$

The last key observation is the following lemma. 

\begin{lemma}\label{lemma:PSD_nec}
For each $i=1,2,\ldots,r$, the following  conditions hold.
\begin{align}
&    M \succeq O \implies \hat{M}_{i,j} \succeq O \ \  (j=1,2\ldots,p_i),\\
&    N^{(k)} \succeq O \implies \hat{N}^{(k)}_{i,j} \succeq O \ \ (j=1,2\ldots,q_i)
\end{align}
\end{lemma}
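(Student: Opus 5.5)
The plan is to show that each $\hat{M}_{i,j}$ is a principal submatrix of $M$, and each $\hat{N}^{(k)}_{i,j}$ a principal submatrix of $N^{(k)}$. Positive semidefiniteness is inherited by principal submatrices, so this would settle the lemma. The whole argument rests on the fact that both reduced matrices are built from sub-collections of the same monomial bases used to define $M$ and $N^{(k)}$.

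\emph{Step 1: $\hat{M}_{i,j}$ is a principal submatrix of $M$.} By construction $\Phi_i \cup \bar{\Phi}_{i,j} \subseteq \Phi_i \cup \bar{\Phi}_i = \mathcal{M}_{\lceil \mu/2 \rceil}$. Hence there is a selection matrix $E_{i,j}$ (a $0/1$ matrix with exactly one unit entry per row, picking the rows of $\bm{v}(\mathcal{M}_{\lceil \mu/2 \rceil})$ indexed by the monomials in $\Phi_i \cup \bar{\Phi}_{i,j}$, in graded lexicographic order) such that
\begin{align}
\bm{v}(\Phi_i \cup \bar{\Phi}_{i,j}) = E_{i,j}\, \bm{v}(\mathcal{M}_{\lceil \mu/2 \rceil}). \notag
\end{align}
Because $\mathcal{L}$ is linear and acts entrywise, it commutes with multiplication by constant matrices. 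This gives $\hat{M}_{i,j} = E_{i,j}\, M\, E_{i,j}^\top$.

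\emph{Step 2: conclude for $\hat{M}_{i,j}$.} For any vector $\bm{y}$ we have $\bm{y}^\top \hat{M}_{i,j} \bm{y} = (E_{i,j}^\top \bm{y})^\top M (E_{i,j}^\top \bm{y}) \ge 0$ whenever $M \succeq O$. Therefore $\hat{M}_{i,j} \succeq O$ for every $j = 1, \ldots, p_i$.

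\emph{Step 3: the localizing matrices.} The same argument applies to $\hat{N}^{(k)}_{i,j}$. Here we use $\Omega_i^{(k)} \cup \bar{\Omega}^{(k)}_{i,j} \subseteq \mathcal{M}_{\lceil (\mu-1)/2 \rceil}$, which gives a selection matrix $F_{i,j}$ with
\begin{align}
x_k\, \bm{v}(\Omega_i^{(k)} \cup \bar{\Omega}^{(k)}_{i,j})\, \bm{v}(\Omega_i^{(k)} \cup \bar{\Omega}^{(k)}_{i,j})^\top = F_{i,j}\, x_k\, \bm{v}(\mathcal{M}_{\lceil (\mu-1)/2 \rceil})\, \bm{v}(\mathcal{M}_{\lceil (\mu-1)/2 \rceil})^\top F_{i,j}^\top . \notag
\end{align}
Applying $\mathcal{L}$ to both sides yields $\hat{N}^{(k)}_{i,j} = F_{i,j} N^{(k)} F_{i,j}^\top \succeq O$.

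There is no substantive obstacle. The only point needing care is Step 1, namely that the reordering into $\Phi_i$ and $\bar{\Phi}_{i}$ blocks (the permutation producing $\hat{M}_i$) does not affect positive semidefiniteness. This holds because a permutation is an orthogonal congruence, and restricting to a subset of monomials is again a congruence by a selection matrix, so the submatrix claim is unaffected by the ordering.
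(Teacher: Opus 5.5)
Your proposal is correct and follows the paper's own justification: the paper simply notes that $\hat{M}_{i,j}$ and $\hat{N}^{(k)}_{i,j}$ are principal submatrices of $M$ and $N^{(k)}$, and your selection-matrix congruence $\hat{M}_{i,j} = E_{i,j} M E_{i,j}^\top$ (and likewise for $\hat{N}^{(k)}_{i,j}$) just writes that observation out explicitly.
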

The lemma holds because $\hat{M}_{i,j}$ and $\hat{N}^{(k)}_{i,j}$ are principal submatrices of $M$ and $N^{(k)}$, respectively. 
Using Lemmas \ref{lemma:equality_coef} and \ref{lemma:PSD_nec}, we can now formulate a relaxed optimization problem with smaller semidefinite matrices, which leads a computationally efficient SDP.  

\begin{theorem}\label{thm:proposed_opt}
Consider a stochastic chemical reaction network with given
stoichiometry vectors $\{\bm{s}_i\}_{i=1}^{r}$ and propensity functions $\{w_{i}(\bm{x})\}_{i=1}^{r}$. Suppose the reaction network satisfies Assumption 1, 
and let $\bm{m}^*$ denote the vector of the stationary moments. 
The optimal value of the following maximization (minimization, resp.) problem $\hat{\rho}_{\max}$ ($\hat{\rho}_{\min}$, resp.) satisfies $\hat{\rho}_{\min} \le \bm{c}^\top \bm{m}^* \le \hat{\rho}_{\max}$ for a given constant $\bm{c}$.
\begin{align}
&\hat{\rho}_{\max} = \max_{\bm{m}} \ \bm{c}^\top \bm{m}\ \ (\hat{\rho}_{\min} = \min_{\bm{m}} \ \bm{c}^\top \bm{m}) \\ &\text{subject to}\notag \\
&    0=\sum_{i=1}^{r}\qty(\sum_{j=1}^{p_i} \langle \hat{A}_{i,j}(\bm{\alpha}), \hat{M}_{i,j} \rangle+\sum_{k\in \mathcal{K}_i}\sum_{j=1}^{q_i}\langle\hat{A}_{i,j}^{(k)}(\bm{\alpha}), \hat{N}_{i,j}^{(k)}\rangle)\notag \\
&\hspace{160pt}  (|\bm{\alpha}|\leq \mu)\label{eq:momeq_RW}\\
& \hat{M}_{i,j} \succeq O \ (i=1,2,\cdots,r, \ j=1,2,\cdots,p_i)\label{eq:mommatpsd_RW}\\
& \hat{N}^{(k)}_{i,j} \succeq O \ (i=1,2,\cdots,r,j=1,2,\cdots,q_i, 
k \in \mathcal{K}_i)\label{eq:locmommatpsd_RW}
\end{align}
\end{theorem}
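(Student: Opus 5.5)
The argument is a relaxation argument. We show that the true stationary moment vector $\bm{m}^*$ is feasible for the problem in Theorem~\ref{thm:proposed_opt}. Once that holds, $\hat{\rho}_{\max} \ge \bm{c}^\top\bm{m}^*$ because the maximum over the feasible set is at least the objective value at any feasible point. The bound $\hat{\rho}_{\min} \le \bm{c}^\top\bm{m}^*$ follows in the same way. Equivalently, the feasible set of Theorem~\ref{thm:con_momopt} is contained in that of Theorem~\ref{thm:proposed_opt}, so $\hat{\rho}_{\min}\le\rho_{\min}\le \bm{c}^\top\bm{m}^*\le\rho_{\max}\le\hat{\rho}_{\max}$. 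Either route works; we will argue via containment of feasible sets and then invoke Theorem~\ref{thm:con_momopt}.

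First, take any $\bm{m}$ feasible for the original SDP, for instance $\bm{m}^*$, which Theorem~\ref{thm:con_momopt} implicitly certifies is feasible under Assumption~1. For the equality constraints, we write the stationary moment equation $0=\bm{a}_{\bm{\alpha}}^\top\bm{m}$ in the reaction-wise form \eqref{eq:momdouti_reactionwise}. We then apply Lemma~\ref{lemma:equality_coef} term by term: \eqref{eq:equi_tran_mom} replaces each $\langle A_i(\bm{\alpha}),M\rangle$ by $\sum_{j}\langle\hat{A}_{i,j}(\bm{\alpha}),\hat{M}_{i,j}\rangle$, and \eqref{eq:equi_tran_loc} replaces each $\langle A_i^{(k)}(\bm{\alpha}),N^{(k)}\rangle$ by $\sum_j\langle\hat{A}^{(k)}_{i,j}(\bm{\alpha}),\hat{N}^{(k)}_{i,j}\rangle$. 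Summing over $i$ and $k\in\mathcal{K}_i$ gives exactly \eqref{eq:momeq_RW} for every $|\bm{\alpha}|\le\mu$.

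The key point here is that $\hat{M}_{i,j}$ and $\hat{N}^{(k)}_{i,j}$ are not new variables. By \eqref{eq:hatMij}--\eqref{eq:hatNij} they are linear functions of the same vector $\bm{m}$ through $\mathcal{L}$, and their entries coincide with entries of $M(\bm{m})$ and $N^{(k)}(\bm{m})$. The identities of Lemma~\ref{lemma:equality_coef} are therefore identities in $\bm{m}$, and the equality constraints of the two problems are equivalent.

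For the semidefinite constraints we use Lemma~\ref{lemma:PSD_nec}. Since $M(\bm{m})\succeq O$ and $N^{(k)}(\bm{m})\succeq O$, each principal submatrix $\hat{M}_{i,j}$ and $\hat{N}^{(k)}_{i,j}$ is positive semidefinite, which gives \eqref{eq:mommatpsd_RW} and \eqref{eq:locmommatpsd_RW}. Hence every feasible point of the original problem, and in particular $\bm{m}^*$, is feasible for the new one, and the inequalities follow.

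The only delicate step is making precise that the reduced matrices depend on the common decision variable $\bm{m}$, so that Lemma~\ref{lemma:equality_coef} transfers the equalities exactly and does not merely relate separate matrix variables. A secondary check is that the relaxed problem may be unbounded. In that case $\hat{\rho}_{\max}=+\infty$ or $\hat{\rho}_{\min}=-\infty$, and the inequalities hold trivially.
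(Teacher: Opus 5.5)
Your proposal is correct and follows essentially the same route as the paper's proof. Lemma~\ref{lemma:equality_coef} makes the equality constraints the same linear constraints in $\bm{m}$, and Lemma~\ref{lemma:PSD_nec} (principal submatrices) shows the original semidefinite constraints imply the reduced ones; hence the original feasible set, which contains $\bm{m}^*$, lies inside the relaxed one. Your extra remarks, that the reduced matrices are functions of the common variable $\bm{m}$ and that an unbounded relaxation makes the bounds trivial, are sound points the paper leaves implicit.
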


\proof
The equivalence between eq. \eqref{eq:momeq_m} and \eqref{eq:momeq_RW} directly follows from Lemma \ref{lemma:equality_coef}. 
Further, the semidefinite constraints \eqref{eq:mom_mat} and \eqref{eq:loc_mom_mat} imply those in eq. \eqref{eq:mommatpsd_RW} and \eqref{eq:locmommatpsd_RW} (Lemma \ref{lemma:PSD_nec}). Therefore, the feasible set of the optimization problem in Theorem 2 contains that in Theorem 1. 
Since $\bm{m}^*$ in Theorem 1 is feasible, we have $\hat{\rho}_{\min} \le \bm{c}^\top \bm{m}^* \le \hat{\rho}_{\max}$.
\endproof
\medskip

Table \ref{table:matrix_size_number} shows the size of the semidefinite matrices and the number of semidefinite constraints in the reduced optimization problem in Theorem 3, where $|\bar{\Phi}_{i,j}| \le |\bar{\Phi}_i|$ and $|\bar{\Omega}^{(k)}_{i,j}| \le |\bar{\Omega}^{(k)}_i|$ hold due to the partition \eqref{eq:set_def_bar}.
Since the dominant computational cost of SDP grows cubically with the matrix dimension \cite{nesterov2013introductory}, this reformulation leads to improved computational efficiency.
\begin{table}[tb]
    \centering
    \caption{Size and number of matrix constraints}
    \resizebox{\linewidth}{!}{
    \begin{tabular}{cccc}
    \hline
        Designation & Matrix &  Size & Number\\
       \hline \hline
         Moment matrix & $M$ & $|\Phi_i|+ |\bar{\Phi}_i|$ & 1\\
        Decomposed moment matrix &$\hat{M}_{i,j}$ & $|\Phi_i|+ |\bar{\Phi}_{i,j}|$ & $p_i$\\
        Localizing matrix & $N^{(k)}$ & $|\Omega^{(k)}_i|+ |\bar{\Omega}^{(k)}_i|$ & 1 \\
         Decomposed localizing matrix& $\hat{N}_{i,j}^{(k)}$ & $|\Omega^{(k)}_i|+ |\bar{\Omega}^{(k)}_{i,j}|$& $q_i$ \\
        \hline
    \end{tabular}
    }
    \label{table:matrix_size_number}
\end{table}
The proposed optimization problem is a relaxation of the original one, and thus, the resulting bounds may become
less tight, i.e., $\hat{\rho}_{\min} \le \rho_{\min}$ and
$\rho_{\max} \le \hat{\rho}_{\max}$. However, as shown in
Section~\ref{sec:chordal}, the proposed decomposition is closely related to
chordal decomposition used in sparse semidefinite programming, which typically provides reasonably tight bounds \cite{fukuda2001exploiting,wang2021chordal,zheng2021chordal}. 
Thus, the conservativeness introduced by this relaxation is expected to be limited.

\section{Numerical Demonstration}\label{sec:num_dem}
We consider the negative feedback gene regulatory system with DNA sponge \cite{wan2020synthetic},  which sequesters the repressor protein (Protein 2), as shown in  Fig.~\ref{fig:7var_RW_bound_time}(a). This system consists of $n=7$ molecular species and $r=14$ reactions. 
Our goal here is to 
find the steady-state mean copy number of
Protein 2, $\mathbb{E}[x_6]$, and to compare the computation times of the original optimization in Theorem \ref{thm:con_momopt} and the proposed one in Theorem \ref{thm:proposed_opt}. 

For this purpose, we formulated the optimization problems for various truncation orders $\mu$ of the moment equation. 
The optimal values obtained from these optimization problems, shown in Figure \ref{fig:7var_RW_bound_time}(b), illustrate the convergence of the bounds as $\mu$ increases. 
Since the proposed optimization is a relaxation of the original problem, the resulting bounds tend to be looser. However, the gap between the upper and lower bounds remains the order of $10^{-2}$ for $\mu=6$. In particular, for $\mu=5, 6$, the bounds obtained by the original and the proposed approach are comparable, indicating that the proposed method provides practically useful bounds of the stationary moments.

As expected from Theorem 3, the maximum dimension of semidefinite matrices involved in each optimization is smaller for the proposed approach (see Table \ref{table:const_7var}), which contributes to the reduction in computation cost. 
Specifically, for $\mu=5, 6$ the proposed method reduces computation time by approximately $20\%$ as shown in Fig. \ref{fig:7var_RW_bound_time}(c)
for $\mu= 5, 6$. 
These results indicate that, by reducing the number of variables, the proposed method can efficiently provide practically useful bounds with accuracy comparable to that of the original method.

The rate constants used in this simulation is summarized in Appendix \ref{sec:rateconst}. All computations were implemented in Julia using the Clarabel optimization solver\cite{goulart2024clarabel}.
\begin{table}[tb]
\centering

\caption{Problem size for each truncation order $\mu$, showing the maximum dimension of semidefinite matrices, the number of constraints attaining this maximum dimension, and the total number of constraints. Values in parentheses indicate the corresponding numbers for the original problem (Theorem 1).}
\begin{tabular}{cccc}  \hline
$\mu$ & Max matrix dim. & \# Max-dim constraint & \# constraint \\ \hline
1 & 2 (8) & 10 (1) & 23 (8)\\
2 & 2 (8) & 161 (8) & 161 (8)\\
3 & 10 (36) & 30 (1) & 191 (8)\\
4 & 10 (36) & 345 (8) & 429 (8)\\
5 & 48 (120) & 20 (1) & 420 (8)\\
6 & 48 (120) & 174 (8) & 306 (8)\\ \hline
\end{tabular}
\label{table:const_7var}
\end{table}

\begin{remark}
Among the reactions shown in Fig.~\ref{fig:7var_RW_bound_time}(a), there exist reactions
whose propensity functions $w_i(\bm{x})$ differ from those of Table \ref{propent}.
In such cases, the matrix decomposition is performed by treating them as two
distinct reactions that share the same stoichiometric vector
$\bm{s}_i$.
\end{remark}

\begin{remark}
The size of each partitioned monomial set $\bar{\Phi}_{i1,j}$, $\bar{\Phi}_{i2,j}$ (see Remark \ref{remark:Phi_i1i2})  is determined so as to balance computational efficiency and redundancy in the resulting semidefinite constraints.
In general, such decompositions may lead to degraded computational performance when there is significant overlap among the partitioned sets. A common strategy to mitigate this issue is clique merging \cite{garstka2020clique}, which reduces redundancy by appropriately enlarging overlapping blocks.
Motivated by this idea, the numbers of elements in the appended sets are determined by $|\bar{\Phi}_{i1,j}| = \ell(\Phi_{i})$, $|\bar{\Phi}_{i2,j}| = \ell(\Phi_{i1})$, and $|\bar{\Omega}^{(k)}_{i,j}| = \ell(\Omega_{i}^{(k)})$, where the function $\ell(\mathcal{S})$ for a base set $\mathcal{S}$ is defined as
\begin{align}
    \ell(\mathcal{S}) := \max\{\ell \in \mathbb{N} \mid 2(|\mathcal{S}|+\ell)^3  -(|\mathcal{S}|+2\ell)^3 \geq 0\}. \notag
\end{align}
\end{remark}

\begin{figure}
  \centering
  \includegraphics[width=\linewidth]{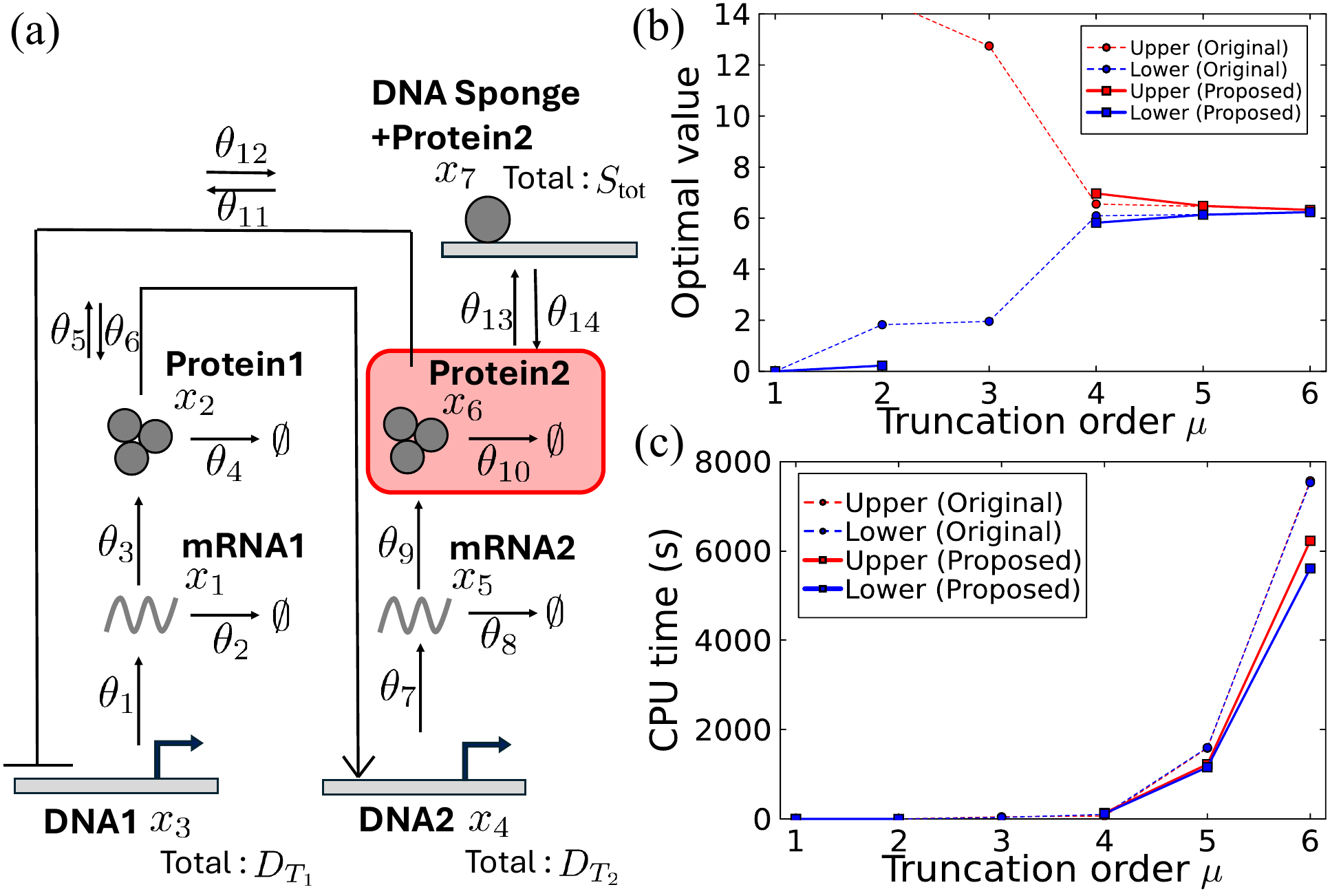}
  \caption{Comparison before and after matrix decomposition for each truncation order $\mu$ (a) Schematic diagram of the reactions ($\rightarrow$ : Activation, $\dashv$ : Repression) (b) the bounds of $\mathbb{E}[x_6]$ (c) computation time}
  \label{fig:7var_RW_bound_time}
\end{figure}

\section{Structural Interpretation via Chordal Decomposition}\label{sec:chordal}
\begin{figure}[t] 
  \begin{center}
    \includegraphics[width=\linewidth]{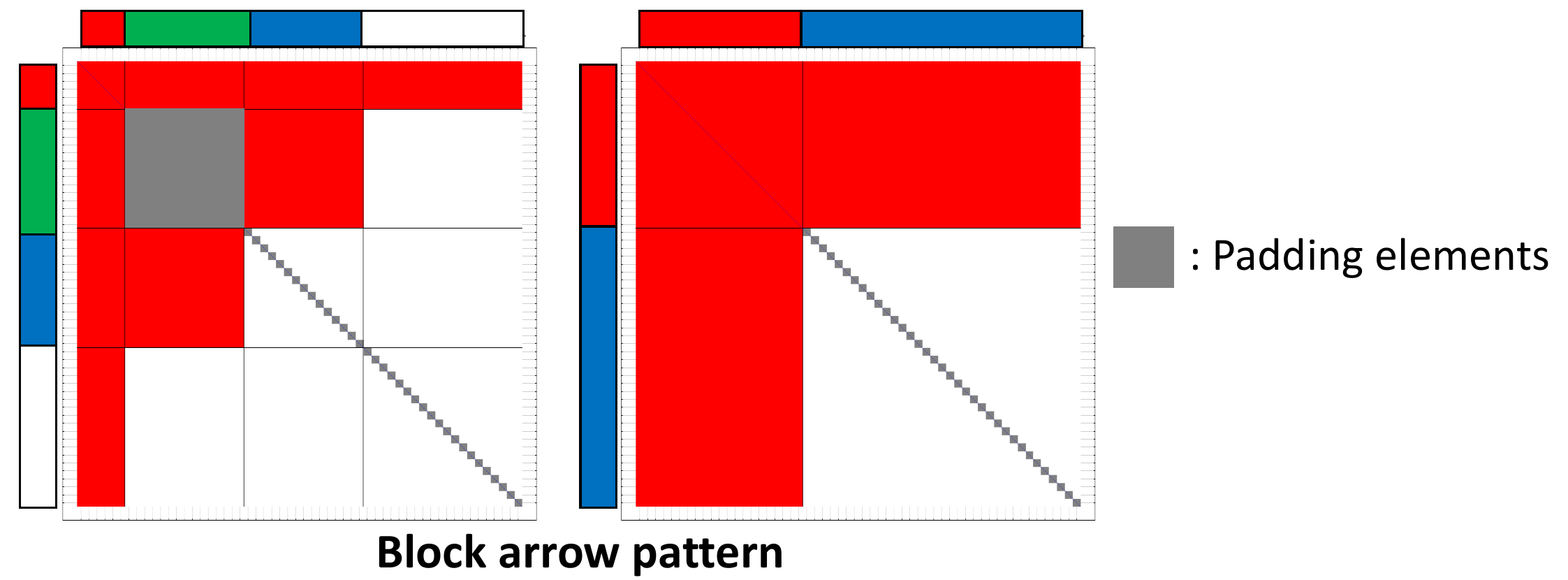}
  \end{center}
  \caption{Block arrow pattern of the coefficient matrices}
  \label{fig:Block_arrow}
\end{figure}
In this section, we provide a structural interpretation of the proposed relaxation in the context of chordal decomposition in semidefinite programming \cite{fukuda2001exploiting} and discuss why the resulting bounds are expected to remain reasonably tight in practice.

The sparsity structure induced by the proposed decomposition can be interpreted in terms of chordal sparsity, \textit{i.e.}, a graph structure for which positive semidefiniteness can be characterized exactly by specific, smaller overlapping submatrices.
To see this, we consider the interaction pattern among monomials induced by the sets $\Phi_i \cup \bar{\Phi}_{i,j}$ $(j=1,2,\ldots,p_i)$, and characterize this pattern through the nonzero structure of the coefficient matrix $\hat{A}_i$. 
This pattern corresponds to the entries shown in Fig.~\ref{fig:Block_arrow}. It represents a structure obtained by padding entries into the nonzero structure of the coefficient matrix $\hat{A}_i$ based on the permutation of the vector basis $\bm{v}(\mathcal{M}_{\lceil \mu/2 \rceil})$. This results in a block arrow pattern, which corresponds to a chordal graph.

In general, a symmetric matrix $X$ with a chordal sparsity pattern is positive semidefinite completable if and only if its principal submatrices $\{X_{\mathcal{C}}\}$ corresponding to the maximal cliques $\mathcal{C}$ of the associated graph are positive semidefinite, where \textit{positive semidefinite completable} means that the matrix $X$ can be made positive semidefinite by appropriately choosing the free entries\cite{grone1984positive}. 
In other words, enforcing positive semidefiniteness on the clique-wise submatrices is not only necessary but also sufficient to ensure positive semidefiniteness of the original matrix $X$ without introducing a relaxation.
In this sense, the sparsity structure induced by the proposed decomposition is consistent with a setting in which no relaxation would be introduced.

However, structural gaps remain between the proposed decomposition and the exact chordal decomposition, which prevent the equivalence from holding and thus introduce a relaxation. Specifically, the matrices $M$ and $N^{(k)}$ possess a structure induced by $\mathbb{E}[\bm{v}(\mathcal{M}_{\lceil \mu/2 \rceil})\bm{v}(\mathcal{M}_{\lceil \mu/2 \rceil})^{\top})]$ 
(see Eqs. \eqref{eq:mom_mat} and \eqref{eq:loc_mom_mat}), which enforces
algebraic dependencies among their entries. This structure is
not imposed in the positive semidefinite completability
problem. Consequently, Lemma~\ref{lemma:PSD_nec} provides only
a necessary condition.

\section{Conclusion}
In this paper, we have proposed a method to accelerate optimization computation by reducing the size of positive semidefinite matrix constraints in moment bounding optimization for stochastic chemical reactions. The proposed approach exploits the fact that each reaction involves only a limited subset of variables and reformulates the constraints using smaller moment matrices. 
We have shown that this decomposition is related to chordal decomposition, a standard sparsity-exploiting technique in semidefinite programming, and is therefore expected to yield reasonably tight bounds in practice despite introducing a relaxation.
The numerical example has demonstrated that the proposed formulation significantly reduces the size of the semidefinite constraints, thereby improving computational efficiency while providing practically useful bounds.

\bibliographystyle{IEEEtran} 

\begin{thebibliography}{10}
\providecommand{\url}[1]{#1}
\csname url@samestyle\endcsname
\providecommand{\newblock}{\relax}
\providecommand{\bibinfo}[2]{#2}
\providecommand{\BIBentrySTDinterwordspacing}{\spaceskip=0pt\relax}
\providecommand{\BIBentryALTinterwordstretchfactor}{4}
\providecommand{\BIBentryALTinterwordspacing}{\spaceskip=\fontdimen2\font plus
\BIBentryALTinterwordstretchfactor\fontdimen3\font minus \fontdimen4\font\relax}
\providecommand{\BIBforeignlanguage}[2]{{%
\expandafter\ifx\csname l@#1\endcsname\relax
\typeout{** WARNING: IEEEtran.bst: No hyphenation pattern has been}%
\typeout{** loaded for the language `#1'. Using the pattern for}%
\typeout{** the default language instead.}%
\else
\language=\csname l@#1\endcsname
\fi
#2}}
\providecommand{\BIBdecl}{\relax}
\BIBdecl

\bibitem{kitano2002systems}
H.~Kitano, ``Systems biology: a brief overview,'' \emph{Science}, vol. 295, no. 5560, pp. 1662--1664, 2002.

\bibitem{del2015biomolecular}
D.~Del~Vecchio and R.~M. Murray, \emph{Biomolecular feedback systems}.\hskip 1em plus 0.5em minus 0.4em\relax Princeton University Press Princeton, NJ, 2015.

\bibitem{elowitz2002stochastic}
M.~B. Elowitz, A.~J. Levine, E.~D. Siggia, and P.~S. Swain, ``Stochastic gene expression in a single cell,'' \emph{Science}, vol. 297, no. 5584, pp. 1183--1186, 2002.

\bibitem{gillespie1992rigorous}
D.~T. Gillespie, ``A rigorous derivation of the chemical master equation,'' \emph{Physica A: Statistical Mechanics and its Applications}, vol. 188, no.~1, pp. 404--425, 1992.

\bibitem{elf2003fast}
J.~Elf and M.~Ehrenberg, ``Fast evaluation of fluctuations in biochemical networks with the linear noise approximation,'' \emph{Genome research}, vol.~13, no.~11, pp. 2475--2484, 2003.

\bibitem{gillespie2000chemical}
D.~T. Gillespie, ``The chemical langevin equation,'' \emph{The Journal of Chemical Physics}, vol. 113, no.~1, pp. 297--306, 2000.

\bibitem{khammash2022cybergenetics}
M.~H. Khammash, ``Cybergenetics: Theory and applications of genetic control systems,'' \emph{Proceedings of the IEEE}, vol. 110, no.~5, pp. 631--658, 2022.

\bibitem{naasell2003extension}
I.~N{\aa}sell, ``An extension of the moment closure method,'' \emph{Theoretical Population Biology}, vol.~64, no.~2, pp. 233--239, 2003.

\bibitem{hespanha2008moment}
J.~Hespanha, ``Moment closure for biochemical networks,'' in \emph{Proceedings of 2008 3rd International Symposium on Communications, Control and Signal Processing}.\hskip 1em plus 0.5em minus 0.4em\relax IEEE, 2008, pp. 142--147.

\bibitem{singh2010approximate}
A.~Singh and J.~P. Hespanha, ``Approximate moment dynamics for chemically reacting systems,'' \emph{IEEE Transactions on Automatic Control}, vol.~56, no.~2, pp. 414--418, 2010.

\bibitem{sakurai2017convex}
Y.~Sakurai and Y.~Hori, ``A convex approach to steady state moment analysis for stochastic chemical reactions,'' in \emph{Proceedings of 2017 IEEE 56th Annual Conference on Decision and Control (CDC)}.\hskip 1em plus 0.5em minus 0.4em\relax IEEE, 2017, pp. 1206--1211.

\bibitem{sakurai2018optimization}
------, ``Optimization-based synthesis of stochastic biocircuits with statistical specifications,'' \emph{Journal of The Royal Society Interface}, vol.~15, no. 138, p. 20170709, 2018.

\bibitem{sakurai2022interval}
------, ``Interval analysis of worst-case stationary moments for stochastic chemical reactions with uncertain parameters,'' \emph{Automatica}, vol. 146, p. 110647, 2022.

\bibitem{dowdy2018bounds}
G.~R. Dowdy and P.~I. Barton, ``Bounds on stochastic chemical kinetic systems at steady state,'' \emph{The Journal of Chemical Physics}, vol. 148, no.~8, p. 084106, 2018.

\bibitem{ghusinga2017exact}
K.~R. Ghusinga, C.~A. Vargas-Garcia, A.~Lamperski, and A.~Singh, ``Exact lower and upper bounds on stationary moments in stochastic biochemical systems,'' \emph{Physical Biology}, vol.~14, no.~4, p. 04LT01, 2017.

\bibitem{kuntz2019bounding}
J.~Kuntz, P.~Thomas, G.-B. Stan, and M.~Barahona, ``Bounding the stationary distributions of the chemical master equation via mathematical programming,'' \emph{The Journal of chemical physics}, vol. 151, no.~3, p. 034109, 2019.

\bibitem{li2025moment}
Z.~Li, M.~Barahona, and P.~Thomas, ``Moment-based parameter inference with error guarantees for stochastic reaction networks,'' \emph{The Journal of Chemical Physics}, vol. 162, no.~13, 2025.

\bibitem{sakurai2018bounding}
Y.~Sakurai and Y.~Hori, ``Bounding transient moments of stochastic chemical reactions,'' \emph{IEEE Control Systems Letters}, vol.~3, no.~2, pp. 290--295, 2018.

\bibitem{dowdy2018dynamic}
G.~R. Dowdy and P.~I. Barton, ``Dynamic bounds on stochastic chemical kinetic systems using semidefinite programming,'' \emph{The Journal of Chemical Physics}, vol. 149, no.~7, p. 074103, 2018.

\bibitem{holtorf2024tighter}
F.~Holtorf and P.~I. Barton, ``Tighter bounds on transient moments of stochastic chemical systems,'' \emph{Journal of Optimization Theory and Applications}, vol. 200, no.~1, pp. 104--149, 2024.

\bibitem{fukuda2001exploiting}
M.~Fukuda, M.~Kojima, K.~Murota, and K.~Nakata, ``Exploiting sparsity in semidefinite programming via matrix completion i: General framework,'' \emph{SIAM Journal on optimization}, vol.~11, no.~3, pp. 647--674, 2001.

\bibitem{wang2021chordal}
J.~Wang, V.~Magron, and J.-B. Lasserre, ``Chordal-{TSSOS}: a moment-sos hierarchy that exploits term sparsity with chordal extension,'' \emph{SIAM Journal on optimization}, vol.~31, no.~1, pp. 114--141, 2021.

\bibitem{zheng2021chordal}
Y.~Zheng, G.~Fantuzzi, and A.~Papachristodoulou, ``Chordal and factor-width decompositions for scalable semidefinite and polynomial optimization,'' \emph{Annual Reviews in Control}, vol.~52, pp. 243--279, 2021.

\bibitem{klep2025exploiting}
I.~Klep, V.~Magron, T.~Metzlaff, and J.~Wang, ``Exploiting term sparsity in symmetry-adapted basis for polynomial optimization,'' \emph{arXiv preprint arXiv:2511.18019}, 2025.

\bibitem{nesterov2013introductory}
Y.~Nesterov, \emph{Introductory lectures on convex optimization: A basic course}.\hskip 1em plus 0.5em minus 0.4em\relax Springer Science \& Business Media, 2013, vol.~87.

\bibitem{wan2020synthetic}
X.~Wan, F.~Pinto, L.~Yu, and B.~Wang, ``Synthetic protein-binding dna sponge as a tool to tune gene expression and mitigate protein toxicity,'' \emph{Nature communications}, vol.~11, no.~1, p. 5961, 2020.

\bibitem{goulart2024clarabel}
P.~J. Goulart and Y.~Chen, ``Clarabel: An interior-point solver for conic programs with quadratic objectives,'' \emph{arXiv preprint arXiv:2405.12762}, 2024.

\bibitem{garstka2020clique}
M.~Garstka, M.~Cannon, and P.~Goulart, ``A clique graph based merging strategy for decomposable {SDPs},'' \emph{IFAC-PapersOnLine}, vol.~53, no.~2, pp. 7355--7361, 2020.

\bibitem{grone1984positive}
R.~Grone, C.~R. Johnson, E.~M. S{\'a}, and H.~Wolkowicz, ``Positive definite completions of partial hermitian matrices,'' \emph{Linear algebra and its applications}, vol.~58, pp. 109--124, 1984.

\end{thebibliography}

\appendix
\subsection{Definition of $\Phi_i$}\label{app:def_Phi_i}
Fix a reaction index $i$ and consider the propensity function
$w_i(\bm{x})$. For zero-th order, unimolecular, and
homo-bimolecular reactions (see Table \ref{propent}),
we define the multi-index $\bm{d}$ as the exponent of the
highest-degree monomial in $w_i(\bm{x})$.
For hetero-bimolecular reactions, \textit{i.e.},
$w_i(\bm{x}) = \theta_i x_j x_k$ with $j<k$,
we define $\bm{d} := \bm{e}_j$, where $\bm{e}_j$
denotes the $j$-th standard basis vector. 
The set $\Phi_i$ is defined by 
\begin{align}
    \Phi_{i} := \left\{ \bm{x}^{\bm{\beta}} \middle|
   \begin{array}{l}
    \bm{\beta} \in \mathbb{N}_0^n, \\
         0\leq |\bm{\beta}| \leq \lceil(\mu-2)/2\rceil+\lceil |\bm{d}|/2 \rceil \\
         \lceil d_j/2\rceil\leq \beta_j \leq\lceil(\mu-2)/2\rceil+\lceil d_j/2 \rceil\\ \quad  \quad \quad \quad \quad\quad \quad \quad(j=1,2,\cdots,n)\\
   \end{array}
   \right\}.\notag
\end{align}
\subsection{Definition of $\Omega_i^{(k)}$}\label{app:def_Omega_i}
Let $\mathcal{D}_{i}^{(k)}$ be the set of monomials obtained by dividing the monomials appearing in the propensity function $w_i(\bm{x})$ by $x_k$, corresponding to the localizing matrix $N^{(k)}$.
For each monomial $\bm{x}^{\bm{\kappa}} \in \mathcal{D}_{i}^{(k)}$ with multi-index $\bm{\kappa} = [\kappa_1,\kappa_2, \ldots, \kappa_n]^{\top} \in \mathbb{N}_0^n$, we define the subset $\Lambda_{\bm{\kappa}}$ to construct the monomial set $\Omega_{i}^{(k)}$ as follows:
\begin{align}
    &\Lambda_{\bm{\kappa}} := \left\{ \bm{x}^{\bm{\beta}} ~\middle|~
   \begin{array}{l}
         \bm{\beta} \in \mathbb{N}_0^n, \\
         0\leq |\bm{\beta}| \leq \lceil(\mu+|\bm{\kappa}|-2)/2\rceil, \\
         \lceil \kappa_j/2\rceil\leq \beta_j \leq\lceil(\mu+\kappa_j-2)/2\rceil \\
         \hspace{60pt}(j=1,2,\dots,n)
   \end{array}
   \right\}, \notag \\
   &\Omega_{i}^{(k)} = \bigcup_{\bm{x}^{\bm{\kappa}}\in\mathcal{D}_{i}^{(k)}} \Lambda_{\bm{\kappa}}. \notag
\end{align}

\subsection{Rate constants used in the simulation}
\label{sec:rateconst}

The following values were used in the simulations for the analysis of Fig.~\ref{fig:7var_RW_bound_time}. These parameter values were chosen to reflect intracellular scales in \textit{E. coli}\cite{del2015biomolecular}. : $\theta_1 = \theta_7 = 0.2~\text{min}^{-1}$, $\theta_2 = \theta_8 = \ln (2)~\text{min}^{-1}$, $\theta_3 = \theta_9 = 0.5~\text{min}^{-1}$, $\theta_4 = \theta_{10} = \ln(2)/10~\text{min}^{-1}$, $\theta_{5} = \theta_{13} = 1.0~\text{copy}^{-1}\text{min}^{-1}$, $\theta_6 = \theta_{14} = 6.0~\text{min}^{-1}$, $\theta_{11} = 1.0~\text{min}^{-1}$, $\theta_{12} = 6.0~\text{copy}^{-1}\text{min}^{-1}$, $D_{T_1} = 50~\text{copy}$, $D_{T_2} = S_{\text{tot}} = 10~\text{copy}$.

\end{document}